\renewcommand{\phi}{\varphi}
\newcommand{\comment}[1]{}
\def\Z{{\mathbb Z}}
\def\Q{{\mathbb Q}}
\def\P{{\mathbb P}}
\def\Zh{{\widehat{\Z}}}
\def\rank{{\operatorname{rank}}}
\def\Aut{{\operatorname{Aut \;}}}
\def\coker{{\operatorname{coker}}}
\newtheorem*{Berry-Esseen}{Berry-Esseen Theorem}
\newtheorem{thm}{Theorem}
\newtheorem{conj}{Conjecture}
\newtheorem{cor}[thm]{Corollary}
\newtheorem{lemma}[thm]{Lemma}
\newtheorem*{defn}{Definition}
\theoremstyle{remark}
\title{The corank of a rectangular random integer matrix.}
\author{Shaked Koplewitz}
\address{Mathematics Department, Yale University, New Haven, CT 06511}
\email{shaked.koplewitz@gmail.com}
\keywords{cokernel group, random matrix, Cohen-Lenstra heuristics.}
\subjclass{05,15}
\begin{document}

\begin{abstract}
We show that under reasonable conditions, a random $n\times (2+\epsilon) n$ integer matrix is surjective on $\Z^{n}$ with probability $1-O(e^{-cn})$. We also conjecture that this should hold for $n\times (1+\epsilon)n$, and provide a counterexample to show that our ``reasonableness'' conditions are necessary.

\end{abstract}

\maketitle

\section{Introduction}

In \cite{bvw}, Bourgain, Vu, and Wood show that, given an $n \times n$ random matrix $A$ whose entries take the values $+1,-1$ independently with probability $\frac{1}{2}$, the probability that $A$ is singular is bounded by $(\frac{1}{\sqrt{2}}+o(1))^{n}$. In particular, this implies that $A$ is injective (as a map $A:\Z^{n}\rightarrow\Z^{n}$) with probability $1-O(e^{-cn})$ for some constant $c>0$. In this paper, we ask:

\textbf{Question} Let $A:\Z^{m}\rightarrow\Z^{n}$ be a random integer matrix (for $m\geq n$). What is the probability that $A$ is surjective?

Our main result is the following:
\begin{thm}
\label{thm:main}
Let $A$ be an $\epsilon$-balanced $n\times (2+\delta)n$ random matrix with entries $|A_{ij}|=O(2^{n^{k}})$ for some constant $k$. Then $A$ is surjective with probability $1-O(e^{-cn})$ for some constant $c>0$ as $n\rightarrow\infty$.
\end{thm}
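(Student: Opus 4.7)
The approach is to reduce surjectivity of $A$ over $\Z$ to a full-rank question modulo each prime, and then apply a union bound over primes. Observe that $A$ is surjective iff $\coker(A) = 0$ iff $\rank(A \bmod p) = n$ for every prime $p$. Hadamard's inequality combined with the entry bound $|A_{ij}| = O(2^{n^k})$ forces $|\coker(A)| \leq P_{\max} := 2^{O(n^{k+1})}$ whenever $A$ has full $\Q$-rank, so only primes $p \leq P_{\max}$ are relevant. Full $\Q$-rank of $A$ (which reduces to linear independence of the first $n$ columns) holds with probability $1 - O(e^{-cn})$ by a Bourgain--Vu--Wood style singularity bound adapted to the $\eps$-balanced setting. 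Thus
\[
P(A \text{ not surjective}) \;\leq\; O(e^{-cn}) + \sum_{p \leq P_{\max}} P\bigl(\rank(A \bmod p) < n\bigr).
\]

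For each prime $p$, I will estimate the per-prime probability by summing over candidate left-kernel vectors:
\[
P\bigl(\rank(A \bmod p) < n\bigr) \;\leq\; \sum_{v \in \F_p^n \setminus \{0\}} \prod_{j=1}^{(2+\del)n} P\bigl(v^T a_j \equiv 0 \bmod p\bigr).
\]
I will split the outer sum by structure of $v$: for \emph{unstructured} $v$ (many nonzero components mod $p$ and not concentrated on a short arithmetic progression), a Halász/Littlewood--Offord estimate using the $\eps$-balanced hypothesis gives $P(v^T a \equiv 0 \bmod p) \leq C/p$ for a small absolute constant $C$; for \emph{structured} $v$ only the weaker bound $1 - \eps'$ is available, but such $v$ are relatively few.

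The unstructured contribution per prime is at most $p^n (C/p)^{(2+\del)n} = C^{(2+\del)n} p^{-(1+\del)n}$, and summing over primes $p \geq 2$ yields $O(e^{-cn})$ when $C$ is sufficiently close to $1$. This is precisely where the $(2+\del)n$ column count is needed: with only $(1+\del)n$ columns one would obtain $p^{-\del n}$, which is not summable up to $P_{\max}$. The structured contribution is more delicate: a naive per-prime bound of $\mathrm{poly}(n,p)(1-\eps')^{(2+\del)n}$ does not survive multiplication by $P_{\max} = 2^{\Omega(n^{k+1})}$. To close the gap I would exploit that any structured $v \bmod p$ lifts to a short integer vector $w$ in a net of size $e^{O(n)}$, so the structured bad event can be analyzed globally over $\Z$ rather than prime-by-prime, via an anti-concentration estimate showing that the integer tuple $w^T A$ is simultaneously divisible by some large prime with probability $e^{-\Omega(n)}$ for each $w$.

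The main obstacle will be the Halász/Littlewood--Offord step: obtaining the constant $C$ small enough and uniform in $p \leq P_{\max}$, and correctly delineating the structured class so that the net-lifting argument goes through. The $\eps$-balanced (``reasonableness'') hypothesis enters precisely here, ensuring that no residue mod $p$ is unnaturally favored by the entry distribution. The counterexample promised in the introduction should show that without such a hypothesis a small prime can systematically divide every $n \times n$ minor and break surjectivity.
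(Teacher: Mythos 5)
Your reduction to ``full rank at every prime'' and your use of the entry bound plus Hadamard/determinant to restrict attention to primes $p \leq P_{\max} = 2^{O(n^{k+1})}$ are both sound and match the paper's Lemma~\ref{lemma:places} and its determinant bound. But from there you diverge from the paper and the proposal has a genuine gap: you try to union-bound over all primes up to $P_{\max}$, and the structured-vector contribution at each prime does not decay with $p$. Concretely, for a sparse left-kernel candidate such as $v=e_i$, the only bound available at a general prime is $\P(v^{T}a_j \equiv 0 \bmod p) \leq 1-\eps$, giving a per-prime, per-vector failure probability of order $(1-\eps)^{(2+\del)n} = e^{-\Omega(n)}$; summing this across the $2^{\Omega(n^{k+1})}$ primes below $P_{\max}$ blows up. You acknowledge this obstacle and propose a ``net-lifting'' argument for the structured class, but that step is not carried out and it is not clear that it closes the gap: showing that a fixed short integer $w$ has $w^{T}A \equiv 0 \pmod p$ for \emph{some} large prime $p$ with probability $e^{-\Omega(n)}$ is itself a nontrivial anti-concentration claim, and the proposal gives no mechanism for it.

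The paper avoids this issue entirely by splitting $A=(B,C)$ with $B$ an $n\times n$ block and $C$ an $n\times(1+\del)n$ block. With probability $1-e^{-cn}$, $B$ is nonsingular (Corollary~\ref{cor:det}, via Maples' theorem applied at a single well-chosen prime), and then $B/p$ is already full rank for every prime $p$ \emph{not} dividing $\det(B)$. The entry bound shows $\det(B)$ has at most $n^{k'}$ prime divisors, so only polynomially many primes remain, and at each such prime the rectangular block $C/p$ is full rank with probability $1-e^{-cn}$ by Theorem~\ref{thm:rectrank} (a field-agnostic bound). The union bound is thus over $\mathrm{poly}(n)$ primes rather than $2^{\mathrm{poly}(n)}$, which is exactly what makes the exponential per-prime bound usable. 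If you want to salvage your approach, the key missing idea is this pre-filtering of primes via the determinant of a square sub-block, not a finer Littlewood--Offord analysis; once that filtering is in place the structured-vs-unstructured dichotomy is unnecessary, because citing a uniform-in-$p$ full-rank estimate for the remaining $(1+\del)n$-column block already suffices.
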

\noindent We recall the definition of $\epsilon$-balanced in Section~\ref{sec:constDiff}.

Some type of independence assumption like $\epsilon$-balancedness is clearly necessary to avoid trivial counterexamples, such as the entries of $A$ all being equal with probability $1$. We will also show in Section~\ref{sec:counterexample} that the bound on the size is also necessary, for any $m$, by giving a counterexample when the entries are allowed to be of size up to $e^{3^{nm}}$.

We also show the following holds, as a direct consequence of the results of Wood in \cite{wr}:

\begin{thm}
\label{thm:constDiff}
Let $A$ be an $\epsilon$-balanced random $n\times (n+u)$ matrix, with $\epsilon$ and $u\geq 1$ constants, then 
\[\limsup_{n\rightarrow\infty}\P(A\text{ is surjective})\leq \prod_{p\text{ prime}}\prod_{k=1}^{\infty}(1-p^{-k-u})=\prod_{k=u+1}^{\infty}\zeta(k)^{-1}.\]
If $u=0$, then $\lim_{n\rightarrow\infty}\P(A\text{ is surjective})=0$.
\end{thm}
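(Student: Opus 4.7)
The plan is to translate surjectivity into vanishing of the cokernel and invoke Wood's cokernel-distribution results from \cite{wr}. Writing $\coker(A) = \Z^n / A\Z^{n+u}$, the map $A$ is surjective iff $\coker(A) = 0$, iff the Sylow $p$-subgroup $(\coker A)_p$ is trivial for every prime $p$. Restricting to a finite set $S$ of primes only increases the probability, giving
\[
\P(A \text{ surjective}) \leq \P\bigl((\coker A)_p = 0 \text{ for all } p \in S\bigr).
\]

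The main input is Wood's theorem in \cite{wr}, which says that for an $\epsilon$-balanced $n\times(n+u)$ random integer matrix the joint distribution of the Sylow $p$-parts of $\coker(A)$ across any fixed finite set $S$ of primes converges, as $n \to \infty$, to a product of independent Cohen--Lenstra-type measures on finite abelian $p$-groups. In particular,
\[
\lim_{n\to\infty}\P\bigl((\coker A)_p = 0\bigr) = \prod_{k=1}^\infty(1 - p^{-k-u})
\]
for each $p$, and the independence in the limit lets one multiply these probabilities across $p \in S$.

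Taking the limsup in $n$ and then letting $S$ exhaust the primes yields
\[
\limsup_{n\to\infty}\P(A \text{ surjective}) \leq \prod_p \prod_{k=1}^\infty(1 - p^{-k-u}).
\]
Substituting $j = k + u$ and applying the Euler product $\zeta(j) = \prod_p(1-p^{-j})^{-1}$ rewrites the right-hand side as $\prod_{j=u+1}^\infty \zeta(j)^{-1}$, which converges for $u \geq 1$. For the $u = 0$ case the same bound still applies, but the factor from $k = 1$ is $\prod_p (1 - p^{-1})$, which vanishes (equivalently, $\sum_p 1/p$ diverges), forcing $\P(A \text{ surjective}) \to 0$. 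The main point to verify is that the $\epsilon$-balanced hypothesis fits Wood's framework in \cite{wr} (it should, as Wood's universality results are designed for exactly this kind of assumption) and that the two limits are taken in the right order; since only a limsup upper bound is required, sending $n \to \infty$ before $|S| \to \infty$ is the natural and only needed direction, so I do not expect any serious obstacle beyond careful bookkeeping.
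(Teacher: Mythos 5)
Your argument is correct and follows essentially the same route as the paper: reduce surjectivity to triviality of the cokernel's Sylow $p$-parts, apply Wood's Corollary 3.4 from \cite{wr} with $G=1$ to get the limiting probability that the $P$-part of the cokernel vanishes for a fixed finite set $P$, observe this gives an upper bound on the limsup, and let $P$ exhaust the primes (with the $u=0$ case handled because $\prod_p(1-p^{-1})=0$). The only cosmetic difference is that you phrase the joint convergence over $S$ as ``independence in the limit'' whereas the paper directly cites the product formula for the $P$-part, but these are the same statement.
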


\noindent In particular, both results hold for $0$-$1$ Bernoulli random matrices, in which the entries are independently chosen to be $1$ with probability $q$ and $0$ otherwise, for constant $0<q<1$.

We conjecture that under the conditions of Theorem~\ref{thm:constDiff}, $\lim_{n\rightarrow\infty}\P(A\text{ is surjective})=\prod_{k=u+1}^{\infty}\zeta(k)^{-1}$. In particular, we guess the following:

\begin{conj}
\label{conj:surj}
Let $A$ be an $\epsilon$-balanced $n\times (1+\delta)n$ random matrix for constant $\epsilon,\delta>0$, with entries bound by $n^{k}$ for some constant $k>0$. Then $\lim_{n\rightarrow\infty}\P(A\text{ is surjective})=1$.
\end{conj}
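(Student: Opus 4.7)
The plan is a prime-by-prime estimate. The map $A:\Z^{(1+\delta)n}\rightarrow \Z^n$ fails to be surjective iff its reduction $A\bmod p$ fails to be surjective onto $\F_p^n$ for some prime $p$. Since every entry of $A$ has absolute value at most $n^k$, every $n\times n$ minor has absolute value at most $n!\cdot n^{kn}=\exp(O(n\log n))$. I would first note that with probability $1-o(1)$ at least one such minor is nonzero over $\Z$ (this follows from the standard $\epsilon$-balanced rank estimate applied modulo any fixed small prime). Consequently any prime $p$ that actually obstructs surjectivity must divide every minor, and so satisfies $p\le \exp(Cn\log n)$ for some constant $C=C(k)$. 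This reduces the problem to controlling $A\bmod p$ for $p$ in this range.

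For each such $p$, I would use the union bound
\[
\P(A\bmod p\text{ not surjective})\;\le\;\sum_{v\in\F_p^n\setminus\{0\}}\prod_{j=1}^{(1+\delta)n}\P(v\cdot X_j\equiv 0\pmod p),
\]
where $X_1,\dots,X_{(1+\delta)n}$ are the independent columns of $A$. For a ``typical'' nonzero $v$, a character-sum Fourier expansion together with $\epsilon$-balancedness yields $\P(v\cdot X_j\equiv 0\pmod p)\le\tfrac{1}{p}+o_n(1)$, so the corresponding contribution is at most $p^n\cdot p^{-(1+\delta)n(1-o(1))}=p^{-\delta n+o(n)}$. For ``structured'' $v$ --- those whose lifts to $[-p/2,p/2]^n$ concentrate on generalized arithmetic progressions of small size --- I would invoke an inverse Littlewood--Offord estimate to bound the number of such $v$ in a way that defeats the $(1+\delta)n$-fold concentration. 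Combining both regimes gives $\P(A\bmod p\text{ not surjective})\le p^{-\delta n+o(n)}$.

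Summing this bound over all primes $p\le \exp(Cn\log n)$ produces a series dominated by its $p=2$ term, yielding a total of $O(2^{-\delta n+o(n)})=o(1)$, which is the desired conclusion together with Theorem~\ref{thm:constDiff}'s philosophy (for $u=\delta n$ extra columns the ``Cohen--Lenstra'' tail $\prod_{k>\delta n+1}\zeta(k)^{-1}$ tends to $1$).

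The main obstacle is establishing the inverse Littlewood--Offord estimate uniformly across the full range of primes, especially intermediate primes $p\asymp n^k$ where wrap-around modulo $p$ interacts nontrivially with the entry bound $n^k$. A natural split is: for $p$ at most a large polynomial in $n$, argue purely via $\F_p$ Fourier analysis in the spirit of Halász; for $p\gg n^{k+1}$, any $v$ admitting a small lift satisfies $v\cdot X_j\equiv 0\pmod p \iff v\cdot X_j=0$ in $\Z$, reducing to a classical Littlewood--Offord bound over $\Z$, while $v$ of larger height can be handled by rescaling by $\gcd(v_1,\dots,v_n,p)$ and repeating the argument.
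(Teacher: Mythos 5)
The statement you are trying to prove is explicitly marked as a conjecture in the paper, so the paper contains no proof to compare against; what follows is an assessment of your sketch on its own terms. The overall strategy (bound the obstructing primes via a nonzero minor, then run a prime-by-prime union bound over potential left-kernel vectors) is reasonable, but the places you flag as "obstacles" are precisely the heart of the problem, and the patches you propose for them do not work.

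The central overclaim is the estimate $\P(v\cdot X_j\equiv 0\pmod p)\leq \tfrac{1}{p}+o_n(1)$ for "typical" nonzero $v$. When $p$ is much larger than the entry bound $n^k$, the entries of a column $X_j$ are $\epsilon$-balanced integers confined to an interval of length $O(n^k)$, a vanishingly small subset of $\F_p$, and there is no reason for $v\cdot X_j\pmod p$ to be anywhere near uniform. The anti-concentration one gets for a generic $v$ from a Littlewood--Offord argument in this regime is on the order of $n^{-1/2}$, not $p^{-1}$, and a union bound of the form $p^n\cdot (n^{-1/2})^{(1+\delta)n}$ does not decay in $p$ and cannot be summed over the primes up to $\exp(Cn\log n)$. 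Your proposed fix for the range $p\gg n^{k+1}$ --- rescaling $v$ by $\gcd(v_1,\dots,v_n,p)$ --- is vacuous: for a nonzero $v\in\F_p^n$ that gcd is automatically $1$, so nothing is rescaled. Restricting attention to $v$ with small lift covers only $n^{O(n)}$ of the roughly $p^n$ vectors, so it does not close the gap either. Note that the paper's proof of Theorem~\ref{thm:main} avoids any union bound over $v\in\F_p^n$: it splits $A$ into an $n\times n$ block $B$ and an \emph{independent} $n\times(1+\delta)n$ block $C$, so the random prime set $P$ (divisors of $\det B$) depends only on $B$ and the uniform-in-$p$ full-rank estimate (Theorem~\ref{thm:rectrank}) can be applied to $C/p$ cleanly. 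With only $(1+\delta)n$ columns that independence is unavailable, and the conjecture asks for exactly the anti-concentration estimate your sketch leaves unproved.
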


Since the time of the original writing, this conjecture has been proved by Nguyen and Wood in \cite{nw}. See section~\ref{sec:further} for some of their results.

Finally, we ask what we can prove under stronger assumptions. The strongest possible case would be when the entries of the matrix are `uniformly distributed' in $\Z$. However, there is no uniform distribution over $\Z$. Our approach to resolving this is to use the Haar measure over the profinite completion $\Zh$, which will give us the following theorem:

\begin{thm}
\label{thm:compsurj}
Let $u\geq 0$ be constant, and let $A:\Zh^{n+u}\rightarrow\Zh^{n}$ be a random matrix, whose entries are independent identically distributed random variables given by the Haar measure on $\Zh$. Then if $u>0$, 
\[\lim_{n\rightarrow\infty}\P(A\text{ is surjective})=\prod_{k=u+1}^{\infty}\zeta(k)^{-1}.\] 

\noindent If $u=0$, this probability converges to zero.
\end{thm}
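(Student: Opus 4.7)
The plan is to exploit the decomposition $\Zh = \prod_p \Z_p$ and the fact that the Haar measure on $\Zh$ is the product of the normalized Haar measures on the $\Z_p$. Consequently, for fixed $n$ the reductions $A \bmod p$ are mutually independent across primes, with each being a uniform random matrix over $\F_p$. Moreover, $A$ is surjective over $\Zh$ iff its $p$-component is surjective over $\Z_p$ for every $p$ (the cokernel of $A$ splits as a product of the $p$-adic cokernels), and by Nakayama this is equivalent to $A \bmod p$ being surjective as a map $\F_p^{n+u}\to\F_p^n$ for every $p$. Thus
\[\P(A\text{ surj})=\prod_p \P(A\bmod p\text{ surj}),\]
reducing the problem to counting full-rank matrices over each $\F_p$.

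A uniform random $n\times(n+u)$ matrix over $\F_p$ is surjective iff its $n$ rows are linearly independent in $\F_p^{n+u}$, which occurs with probability
\[\prod_{i=0}^{n-1}\bigl(1 - p^{i-(n+u)}\bigr) \;=\; \prod_{j=u+1}^{n+u}(1-p^{-j}) \;\xrightarrow[n\to\infty]{}\; \prod_{j=u+1}^{\infty}(1-p^{-j}).\]
Taking the product of these limits over all primes formally yields $\prod_{j=u+1}^{\infty}\zeta(j)^{-1}$, which is the desired answer for $u\geq 1$.

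The main obstacle is justifying the interchange of $\lim_{n\to\infty}$ with the infinite product over primes. The upper bound is the easy direction: for any finite set $S$ of primes, independence and monotonicity give $\P(A\text{ surj})\leq \prod_{p\in S}\prod_{j=u+1}^{n+u}(1-p^{-j})$, and letting $n\to\infty$ and then $S$ exhaust all primes yields $\limsup_n \P(A\text{ surj})\leq \prod_{j=u+1}^{\infty}\zeta(j)^{-1}$. For the matching lower bound when $u\geq 1$, I plan to use the elementary tail estimate $\P(A\bmod p\text{ not surj})\leq \sum_{j\geq u+1}p^{-j}\leq 2p^{-u-1}$, whose sum over primes is finite precisely because $u\geq 1$. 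Given any $\epsilon>0$, choose $P$ so that $\prod_{p>P}(1-2p^{-u-1})>1-\epsilon$ (which is independent of $n$); splitting the product at $P$ and then letting $n\to\infty$ gives $\liminf_n \P(A\text{ surj})\geq (1-\epsilon)\prod_{p\leq P}\prod_{j=u+1}^{\infty}(1-p^{-j})$, and letting $P\to\infty$ and $\epsilon\to 0$ closes the argument.

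For $u=0$ no lower bound is needed: the upper bound already forces the probability to zero, since $\limsup_n \P(A\text{ surj})\leq \prod_{p\leq P}\prod_{j=1}^{\infty}(1-p^{-j})$ for every $P$, and this tends to $0$ as $P\to\infty$ because $\prod_p (1-p^{-1})=0$ by divergence of $\sum_p p^{-1}$.
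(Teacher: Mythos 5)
Your argument is correct, and it follows the same high-level structure as the paper's: factor $\Zh$ as $\prod_p \Z_p$, use independence of the $p$-components, reduce each $\Z_p$-surjectivity to $\F_p$-surjectivity, and compute the probability that a uniform $n\times(n+u)$ matrix over $\F_p$ has full row rank as $\prod_{j=u+1}^{n+u}(1-p^{-j})$ (the paper proves this formula by a row-by-row induction; you just quote it, which is fine since it is standard). Where you diverge is in the final limit. The paper observes that for every fixed $n$ the doubly-indexed product can be rearranged exactly:
\[
\prod_{p}\prod_{k=u+1}^{n+u}(1-p^{-k})=\prod_{k=u+1}^{n+u}\prod_{p}(1-p^{-k})=\prod_{k=u+1}^{n+u}\zeta(k)^{-1},
\]
so $\P(A\text{ surjective})$ has a closed form for each $n$, and the claim reduces to letting $n\to\infty$ in a monotone product. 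You instead sandwich the probability: an upper bound by truncating the product over primes to a finite set $S$, and a lower bound for $u\geq 1$ via the tail estimate $\P(A\bmod p \text{ not surj})\leq 2p^{-u-1}$, which is summable over primes and lets you control $\prod_{p>P}$ uniformly in $n$. Both routes are valid. The paper's exact rearrangement is slicker and yields the clean finite-$n$ identity $\P(A\text{ surj})=\prod_{k=u+1}^{n+u}\zeta(k)^{-1}$, though it leaves the absolute-convergence justification for the swap implicit; your two-sided estimate is slightly more laborious but makes the interchange of $\lim_{n\to\infty}$ with the infinite product over primes fully explicit, which is arguably a more robust way to present it. Your handling of $u=0$ (upper bound alone suffices, since $\prod_p(1-p^{-1})=0$) matches the paper's.
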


In particular, Theorem~\ref{thm:compsurj}, along with the observation that the probability that $A$ is surjective monotonically increases with $u$, implies the following corollary:

\begin{cor}
\label{cor:compsurj}
Let $u(n)$ be a sequence such that $\lim_{n\rightarrow\infty}u(n)=\infty$, and let $A:\Zh^{n+u(n)}\rightarrow\Zh^{n}$ be a random matrix, whose entries are independent identically distributed random variables given by the Haar measure on $\Zh$. Then
\[\lim_{n\rightarrow\infty}\P(A\text{ is surjective})=1\]
\end{cor}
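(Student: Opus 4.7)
The plan is to deduce this directly from Theorem~\ref{thm:compsurj} by exploiting monotonicity in the number of columns. The key observation is that if $A:\Zh^{n+u(n)}\rightarrow\Zh^{n}$ denotes the random matrix in the statement, and $A'$ denotes the submatrix formed by the first $n+u$ columns of $A$ (for any constant $u$ with $u\leq u(n)$), then the image of $A'$ is contained in the image of $A$. Therefore $\P(A\text{ surjective})\geq \P(A'\text{ surjective})$, and since the entries of $A$ are i.i.d.\ Haar on $\Zh$, the matrix $A'$ is distributed exactly as the $n\times(n+u)$ random matrix of Theorem~\ref{thm:compsurj}.

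Fix any constant integer $u\geq 1$. Since $u(n)\rightarrow\infty$, we have $u(n)\geq u$ for all sufficiently large $n$, so by the previous paragraph and Theorem~\ref{thm:compsurj},
\[
\liminf_{n\rightarrow\infty}\P(A\text{ surjective})\;\geq\;\lim_{n\rightarrow\infty}\P(A'\text{ surjective})\;=\;\prod_{k=u+1}^{\infty}\zeta(k)^{-1}.
\]
Since this inequality holds for every fixed $u\geq 1$, we may let $u\rightarrow\infty$ on the right-hand side. Using $\zeta(k)=1+O(2^{-k})$, the tail satisfies $-\log\prod_{k=u+1}^{\infty}\zeta(k)^{-1}=\sum_{k=u+1}^{\infty}\log\zeta(k)=O(2^{-u})\rightarrow 0$, so $\prod_{k=u+1}^{\infty}\zeta(k)^{-1}\rightarrow 1$. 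Combined with the trivial upper bound $\P(A\text{ surjective})\leq 1$, this forces the limit to equal~$1$.

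There is no real obstacle here: the argument is essentially a two-line consequence of Theorem~\ref{thm:compsurj}, so the main thing to check is that the monotonicity step is legitimate. This is immediate in the Haar setting because a column-subset of an i.i.d.\ Haar matrix is again an i.i.d.\ Haar matrix of the appropriate shape, so the distributional identification of $A'$ with the matrix appearing in Theorem~\ref{thm:compsurj} requires no further work.
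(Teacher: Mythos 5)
Your proof is correct and matches the paper's intended argument: the paper explicitly says the corollary follows from Theorem~\ref{thm:compsurj} together with monotonicity in the number of columns, which is precisely the column-restriction argument you give. The only thing you add is the (routine but welcome) verification that $\prod_{k=u+1}^{\infty}\zeta(k)^{-1}\rightarrow 1$ as $u\rightarrow\infty$, which the paper leaves implicit.
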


\noindent In particular, this implies that a random $n\times cn$ matrix over $\Zh$ with $c>1$ will be surjective with probability $\rightarrow 1$.

Another natural approach is to take $A=A_{n,m,k}$ to be the matrix whose entries are independent identically distributed random variables uniformly distributed in ${-k,\dots,k}$, and take $k\rightarrow\infty$. The authors of \cite{mn} show that
\[\lim_{k\rightarrow\infty}\P(A_{n,m,k}\text{ is surjective})=\P(A_{n,m}\text{ is surjective}),\]
where $A_{n,m}$ is a random $n\times m$ matrix over $\Zh$.

\textbf{Acknowledgements}.
The author is  grateful to Sam Payne, Nathan Kaplan, and Van H. Vu for their many helpful suggestions along the way.

This work was partially supported by NSF CAREER DMS-1149054.

\section{Results for $n\times (n+u)$ Matrices for constant $u$}
\label{sec:constDiff}

In this section, we prove Theorem~\ref{thm:constDiff}. We will rely on the following lemma:
\begin{lemma}
\label{lemma:places}
A matrix $A:\Z^{m}\rightarrow\Z^{n}$ is surjective if and only if $A/p:(\Z/p\Z)^{m}\rightarrow(\Z/p\Z)^{n}$ is surjective for every prime $p$. Here $A/p$ is the matrix over $\Z/p\Z$ given by $(A/p)_{ij}=A_{ij}\pmod{p}$. 
\end{lemma}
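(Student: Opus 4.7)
The forward direction is immediate: if $A$ is surjective, then for any $\bar{y} \in (\Z/p\Z)^n$ pick a lift $y \in \Z^n$, choose $x \in \Z^m$ with $Ax = y$, and reduce mod $p$ to get $(A/p)\bar{x} = \bar{y}$. So the real content is the converse, and I would handle it via Smith Normal Form.

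The plan is to write $A = UDV$, where $U \in \GL_n(\Z)$, $V \in \GL_m(\Z)$, and $D$ is a rectangular ``diagonal'' matrix with entries $d_1, \ldots, d_n$ on the diagonal, satisfying $d_1 \mid d_2 \mid \cdots \mid d_n$ (with the convention that a zero divides nothing nontrivial and appears at the end). Since $U$ and $V$ are invertible over $\Z$, they are automatically invertible mod $p$ for every prime $p$, so $A$ is surjective if and only if $D$ is, and similarly $A/p$ is surjective if and only if $D/p$ is. This reduces the problem to the diagonal case.

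For the diagonal case, $D : \Z^m \to \Z^n$ is surjective if and only if every $d_i$ is a unit in $\Z$, i.e., $d_i = \pm 1$. The key observation is the contrapositive: if some $d_i$ is not a unit, then either $d_i = 0$ (in which case $D$ is not even full rank, so $D/p$ fails to be surjective for every prime $p$), or $d_i$ has a prime factor $p$, in which case the $i$-th row of $D/p$ is zero and $D/p$ is not surjective. Thus if $A/p$ is surjective for every $p$, all the $d_i$ must be units, so $D$ and hence $A$ is surjective.

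A slicker but equivalent way to phrase this, which I would mention, is via the cokernel: the right-exactness of $-\otimes_\Z \Z/p\Z$ gives $\coker(A/p) = \coker(A)/p\coker(A)$, so $A/p$ is surjective iff $p \cdot \coker(A) = \coker(A)$. Since $\coker(A)$ is a finitely generated abelian group $\Z^r \oplus T$, the condition $p\coker(A) = \coker(A)$ for all primes $p$ forces $r=0$ and $T = 0$. I don't anticipate any genuine obstacle here; the only thing to double-check is the convention on Smith Normal Form for rectangular matrices and the interpretation of ``surjective'' when some $d_i = 0$.
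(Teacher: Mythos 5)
Your proposal is correct, but it takes a genuinely different route from the paper. The paper avoids Smith Normal Form entirely: for each prime $p$, surjectivity of $A/p$ yields an $n\times n$ submatrix $B$ of $A$ with $p\nmid\det(B)$; then $B(\Z^{n})\subseteq A(\Z^{m})$ is a full-rank sublattice of $\Z^{n}$, so the index $D=[\Z^{n}:A(\Z^{m})]$ is finite and divides $|\det(B)|$, hence is coprime to $p$. Running over all primes forces $D=1$. Your argument instead factors $A=UDV$ with $U,V$ unimodular and reduces to the diagonal case, or equivalently observes that $\coker(A/p)\cong\coker(A)\otimes\Z/p\Z$ and uses the structure of finitely generated abelian groups. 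Both are sound and roughly the same length; the paper's version is more elementary in the sense that it needs only the fact that an integer $n\times n$ matrix with nonzero determinant has cokernel of order $|\det|$, whereas yours leans on Smith Normal Form or the structure theorem. Your cokernel formulation is arguably the cleanest and generalizes most directly to modules over other PIDs or Dedekind domains, which is a small bonus worth keeping in mind.
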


\begin{proof}
Clearly, if $A:\Z^{m}\rightarrow\Z^{n}$ is surjective, so is $A/p:(\Z/p\Z)^{m}\rightarrow(\Z/p\Z)^{n}$ for every $p$.

Conversely, assume $A/p:(\Z/p\Z)^{m}\rightarrow(\Z/p\Z)^{n}$ is surjective for every $p$. Then $A/p$ has rank $n$, and in particular contains an $n\times n$ submatrix $B/p\subseteq A/p$ with nonzero determinant. As $\det(B/p)=\det(B)\pmod{p}$, this implies that $\det(B)$ is nonzero. This implies that the columns of $B$, considered as vectors over $\Q$, generate $\Q^{n}$ as a vector space, and hence $B(\Z^{n})$ is a full-rank lattice. But $A(\Z^{m})$ is an abelian group containing $B(\Z^{n})$, so it must also be a full-rank lattice. In particular, $D=|\Z^{n}/A\Z^{m}|$ is finite, and $D$ divides $|\det(B)|$.

But $p\nmid|\det(B)|$, hence $p\nmid D$. As this hold for every prime $p$, we get $D=1$, so $\Z^{n}=A\Z^{m}$, which completes the proof.
\end{proof}
\noindent In particular, this theorem implies that a square matrix is surjective if it is nonsingular at every prime $p$. In contrast, a square matrix is injective if it is nonsingular at any prime $p$.

It is worth noting that the random matrices of \cite{bvw}, whose entries are $\pm 1$, are never surjective, since $A/2$ is the all-ones matrix.

We now recall the following definition from \cite{wr}:
\begin{defn}
A random variable $y$ taking values in a ring $T$ is $\epsilon$-balanced if for every maximal ideal $\mathfrak{p}$ of $T$ and every $r\in T/\mathfrak{p}$,we have $\P(y\equiv r\pmod{\mathfrak{p}})\leq (1-\epsilon)$. In particular, if $T$ is a field, $y$ is $\epsilon$-balanced if for every $r\in T$, we have $\P(y=r)\leq (1-\epsilon)$.

A random matrix is $\epsilon$-balanced if its entries are independent and $\epsilon$-balanced.
\end{defn}
\noindent In particular, a matrix whose entries are independent identically distributed Bernoulli random variables equal to $0$ with probability $1>q>0$ and $1$ otherwise is $\epsilon$-balanced, for $\epsilon=\min(q,1-q)$.

For any abelian group $G$ and prime $p$, define $G_{p}$ to be a $p$-Sylow subgroup. If $P$ is a set of primes, define $G_{P}=\prod_{p\in P}G_{p}$. We now recall the following theorem of Wood:

\begin{thm}[Corollary 3.4 of \cite{wr}]
\label{thm:wr}
Let $\epsilon>0$ and let $A$ be an $\epsilon$-balanced $n\times(n+u)$ random matrix. Let $G$ be a finite abelian group and let $P$ be a finite set of primes including all those dividing $|G|$. Then:

\[ \lim_{n\rightarrow\infty}\P((\Z^{n}/(A\Z^{n+u}))_{P}\simeq G)=\frac{1}{|G|^{u}|\Aut(G)|}\prod_{p\in P}\prod_{k=1}^{\infty}(1-p^{-k-u}).\]
\end{thm}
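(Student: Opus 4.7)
The plan is to prove Theorem~\ref{thm:wr} by the method of moments. For each finite abelian group $H$, I would compute the $H$-moment $M_{H}(n):=\mathbb{E}[\#\Sur(\coker(A),H)]$, show that $\lim_{n\to\infty}M_{H}(n)=|H|^{-u}$, and then invoke a universality result for finite-abelian-group-valued random variables to recover the full distribution of the $P$-part of $\coker(A)$.

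For the moment computation, observe that a surjection $\coker(A)\to H$ corresponds to a surjection $\phi:\Z^{n}\to H$ vanishing on every column of $A$. Letting $a_{1},\dots,a_{n+u}$ denote the columns of $A$ and using their independence,
\[M_{H}(n)=\sum_{\phi\in\Sur(\Z^{n},H)}\prod_{j=1}^{n+u}\P(\phi(a_{j})=0).\]
Each $\phi$ is determined by an $n$-tuple $(\phi_{1},\dots,\phi_{n})\in H^{n}$ generating $H$. For a ``generic'' $\phi$, whose coordinates $\phi_{i}$ together with the $\epsilon$-balanced entries of $a_{j}$ are spread across the whole group, a Fourier expansion on $H$ gives $\P(\phi(a_{j})=0)=|H|^{-1}(1+O(e^{-cn}))$. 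Since the number of such generic surjections is $|H|^{n}(1+o(1))$, their contribution to $M_{H}(n)$ is $|H|^{n}\cdot |H|^{-(n+u)}=|H|^{-u}$.

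The main technical obstacle is controlling ``bad'' surjections $\phi$ whose coordinates concentrate on a proper subgroup or coset of $H$; there $\P(\phi(a_{j})=0)$ can far exceed $|H|^{-1}$. One must show that the rarity of such $\phi$ more than compensates, which is typically done by inclusion–exclusion over subgroups of $H$ paired with a Halász/Odlyzko-type small-ball estimate that quantifies how $\epsilon$-balancedness of the entries of $a_{j}$ rules out strong concentration of $\phi(a_{j})$. Getting the right exponential bound in $n$ is the heart of the argument.

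Once $M_{H}(n)\to |H|^{-u}$ is established for every finite abelian group $H$, the proof concludes by matching these moments to the proposed limit distribution: a direct computation shows that the measure on finite abelian groups supported at primes in $P$ assigning mass $\frac{1}{|G|^{u}|\Aut(G)|}\prod_{p\in P}\prod_{k=1}^{\infty}(1-p^{-k-u})$ to $G$ has $H$-moment equal to $|H|^{-u}$ for every $H$ with support in $P$, and Wood's universality theorem identifies this as the only such measure.
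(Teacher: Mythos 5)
This statement is not proved in the paper under review; it is quoted verbatim as Corollary~3.4 of Wood's paper \cite{wr} and used as a black box. Your sketch correctly reproduces the architecture of Wood's actual argument in that reference: compute the $H$-moments $\mathbb{E}[\#\Sur(\coker A, H)]$, show they converge to $|H|^{-u}$ by summing over $\phi\in\Sur(\Z^n,H)$ and separating ``spread-out'' surjections (whose contribution is $|H|^{-u}$) from those concentrating near proper subgroups, and then invoke a moment-uniqueness theorem to identify the limit with the Cohen--Lenstra-type distribution. Since the paper itself offers no proof, there is nothing to contrast; your outline is a faithful summary of the cited source, with the acknowledged caveat that the hard part (the quantitative small-ball bounds ruling out concentrated $\phi$, which Wood handles via her notion of a ``code'' and a careful stratification by subgroup depth) is described but not carried out.
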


Using this, we now prove Theorem~\ref{thm:constDiff}.

\begin{proof}[Proof of Theorem~\ref{thm:constDiff}]
Let $A:\Z^{n+u}\rightarrow\Z^{n}$ be an $\epsilon$-balanced random matrix. By Lemma~\ref{lemma:places}, $A$ is surjective only if $A/p$ is surjective for every prime $p$. This is equivalent to $(\Z^{n}/(A\Z^{n+u}))_{p}$ being the trivial group for every prime $p$.

Let $P$ be a finite set of primes. Then by Theorem~\ref{thm:wr} with $G=1$, 
\[ \lim_{n\rightarrow\infty}(\P((\Z^{n}/(A\Z^{n+u}))_{p}\simeq 1\text{ for all }p\in P)=\prod_{p\in P}\prod_{k=1}^{\infty}(1-p^{-k-u}).\]

But for any finite set $P$, this is an upper bound on $\limsup\P(A\text{ is surjective})$. Taking $P$ to be increasingly large gives us the theorem.
\end{proof}

\section{Surjectivity of random $n\times (2+\delta)n$ matrices}
\label{sec:surj}

In this section, we prove Theorem~\ref{thm:main}. First, we recall the following theorem from \cite{skb}:

\begin{thm}
\label{thm:rectrank}
Let $A$ be an $\epsilon$-balanced $n\times m$ random matrix over a field $\mathbb{F}_p$ with $m\geq (1+\delta)n$ for some constant $\delta>0$. Then $A$ has full rank with probability at least $1-e^{-cn}$ for some constant $c$ depending only on $\epsilon,\delta$. In particular, $c$ is independent of $\mathbb{F}_p$.
\end{thm}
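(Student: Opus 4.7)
My plan is to reveal the columns $c_1, \ldots, c_m \in \mathbb{F}^n$ of $A$ one at a time and track $d_i = \dim\mathrm{span}(c_1, \ldots, c_i)$, showing that $d_m = n$ with probability $1 - e^{-cn}$.

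The key quantitative input is the following subspace bound: for any subspace $V \subsetneq \mathbb{F}^n$ of codimension $k$, an $\epsilon$-balanced random vector $c$ lies in $V$ with probability at most $(1-\epsilon)^k$. To see this, write $V = \ker W$ for some $k \times n$ matrix $W$ of rank $k$ and choose $k$ columns of $W$ forming an invertible submatrix indexed by $J$; the coordinates $c_J$ are then forced to take a unique value (determined by the remaining coordinates) for $c$ to lie in $V$, and since these $k$ coordinates are independent and $\epsilon$-balanced, the event has probability at most $(1-\epsilon)^k$. Crucially, this bound is uniform across all fields $\mathbb{F}$, which is what ensures the final constant $c$ depends only on $\epsilon$ and $\delta$.

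Given this, I would run a two-phase analysis. Fix a constant $K = K(\epsilon,\delta)$ large enough that $(1-\epsilon)^{K+1} \leq \delta/4$; one can take $K = O(\log(1/\delta)/\epsilon)$. In Phase 1, I bound the number of columns $T$ needed for $d_T$ to first reach $n-K$. While $d_i < n-K$ the codimension is at least $K+1$, so each column fails to enlarge the span with probability at most $\delta/4$; thus $T$ is stochastically dominated by a negative binomial with $n-K$ successes and success probability $\geq 1-\delta/4$, and a standard Chernoff/tail bound gives $T \leq (1+\delta/2)(n-K)$ with probability $\geq 1 - e^{-c_1 n}$. In Phase 2, we have at least $m - T \geq \delta n/2 + K$ columns remaining, and (as long as $d_i < n$) each enlarges the span with probability $\geq \epsilon$ by the codimension-$1$ case of the bound. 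We need only $K$ more successes in these trials; since the expected number of successes $\epsilon(\delta n/2 + K)$ far exceeds the constant $K$ for large $n$, Chernoff gives a failure probability $\leq e^{-c_2 n}$. A union bound over the two phases yields $\P(A \text{ does not have full rank}) \leq e^{-cn}$.

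The main obstacle is the small-$\epsilon$ regime. A naive argument using only $\P(c \in V) \leq 1-\epsilon$ for every proper subspace would require $\epsilon(1+\delta) > 1$, which is not assumed. The two-phase split is essential: it exploits the much stronger bound $(1-\epsilon)^k$ at large codimension to push the rank rapidly up to $n - O(1/\epsilon)$, and only uses the weaker codimension-$1$ bound to close the last $K$ dimensions, where the surplus of $\delta n$ extra columns is more than enough to guarantee success with exponentially high probability. Verifying that the probabilistic tail bounds for the negative binomial and binomial distributions match up cleanly is routine once this decomposition is in place.
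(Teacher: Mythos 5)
Your argument is correct, and the central subspace bound you isolate --- that an $\epsilon$-balanced random vector in $\mathbb{F}^n$ lies in any fixed subspace of codimension $k$ with probability at most $(1-\epsilon)^k$, proved by pivoting on an invertible $k\times k$ minor of a defining matrix --- is exactly the right field-independent ingredient. The two-phase stopping-time analysis (negative-binomial domination to drive the span to codimension $K$, then a binomial Chernoff bound to close the last $K$ dimensions using only the codimension-$1$ estimate) does correctly handle the small-$\epsilon$ regime, and the union of the two phase failures gives the claimed $1-e^{-cn}$.

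That said, the paper cites this theorem from \cite{skb} and indicates that the argument there proceeds by \emph{rows} rather than columns, which is substantially simpler and worth noting. An $n\times m$ matrix has only $n$ rows, each lying in $\mathbb{F}^m$; the span of the first $i-1$ rows has dimension at most $i-1\leq n-1$, hence codimension at least $m-i+1\geq m-n+1\geq\delta n+1$ in $\mathbb{F}^m$. Applying your same subspace bound to each row and taking a union bound gives
\[
\P(\text{$A$ not full rank})\;\leq\;\sum_{i=1}^{n}(1-\epsilon)^{m-i+1}\;\leq\;\frac{(1-\epsilon)^{\delta n+1}}{\epsilon}\;\leq\;e^{-cn},
\]
with $c$ depending only on $\epsilon,\delta$. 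Revealing rows keeps the ambient codimension uniformly large ($\geq\delta n$) at every step, so there is never a ``hard'' low-codimension tail to worry about, and no phase decomposition, stochastic domination, or negative-binomial tail bound is required. Your column-by-column approach is valid but pays for tracking a dimension that climbs toward its ceiling; the row-by-row version is essentially a one-line geometric series once the subspace bound is in hand, which is presumably why the paper mentions it as being analogous to the proof of Lemma~\ref{lemma:padicpart}.
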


\noindent The proof bounds the probability that each row is dependent on the previous rows, similarly to the proof of Lemma~\ref{lemma:padicpart}.

In particular, This implies the following corollary:
\begin{cor}
\label{cor:det}
Let $A$ be an $\epsilon$-balanced $n\times m$  random matrix over $\Z$ with $m\geq (1+\delta)n$ for some constant $\delta>0$. Then there exists a constant $c$ depending only on $\epsilon,\delta$, such that with probability $\geq 1-e^{-c\epsilon n}$, $A$ contains an $n\times n$ submatrix $A'$ with nonzero determinant.
\end{cor}

\begin{proof}
Using Theorem~\ref{thm:rectrank} for $A/2$ gives us that
 \[\P(\rank{\left (A/2 \right)}=n)\geq 1-e^{-c\epsilon n}.\]

But if $A/2$ has rank $n$, it must contain a full rank $n\times n$ submatrix $A'/2$. Hence $\det(A')\neq 0\pmod{2}$, and in particular $\det(A')\neq 0$. 
\end{proof}

We are now ready to prove Theorem~\ref{thm:main}.

\begin{proof}[Proof of Theorem~\ref{thm:main}]
Let $A$ be a random $n\times(2+\delta)n$ matrix. We split it into two submatrices $A=\left(B,C\right)$, where $B$ and $C$ are $n\times (1+\frac{\delta}{2})n$. Note that $B$ and $C$ are both $\epsilon$-balanced.

As $B$ is $\epsilon$-balanced, by Corollary~\ref{cor:det} with probability at least $1-e^{-cn}$, it contains a submatrix $B'$ such that $\det(B')\neq 0$, where $c$ depends only on $\epsilon,\delta$.

We can also bound the size of $|\det(B')|$. Recall that the entries of $A$, hence in particular the entries $B'_{ij}$ of $B'$, are all bounded by $O(2^{n^{k}})$ for some constant $k$. Assume that $|B'_{ij}|\leq 2^{n^{k}}$. As the determinant is the sum of $n!$ products of permutations of the $B'_{ij}$, we can bound $|\det(B')|\leq (n!)(2^{n^{k}})^{n}\leq 2^{n^{k'}}$ for a sufficiently large constant $k'$. If $|\det(B')|$ is nonzero, the number of prime divisors of $|\det(B')|$ is bounded by $\log_{2}(|\det(B')|)\leq n^{k'}$. 

Let $P$ denote the set of prime divisors of $|\det(B')|$. By Theorem~\ref{thm:rectrank}, for every $p\in P$, $C/p$ is surjective over $\Z/p\Z$ with probability at least $1-e^{-c'n}$ for some $c'$ depending only on $\epsilon,\delta$. If $\det(B')$ is nonzero, then $|P|\leq n^{k'}$, so the probability that $C/p$ is surjective over every $p\in P$ is at least $1-n^{k'}e^{-c'n}$. As $\det(B')$ is nonzero with probability at least $1-e^{-c'n}$, the probability that $C/p$ is surjective for every $p\in P$ is at least $1-(n^{k'}+1)e^{-c'n}\geq 1-e^{-an}$ for some constant $a>0$.

But if this holds, then $A$ is surjective: $B'/p$ is surjective over every $p\notin P$, and $C/p$ is surjective over $p\in P$. Hence $A/p$ is surjective over every prime $p$, so by Lemma~\ref{lemma:places}, $A$ is surjective.
\end{proof}

\section{Counterexample with large entries}
\label{sec:counterexample}
In this section, we show that the bound on the size of the entries given in Theorem~\ref{thm:main} is necessary by showing a distribution of the entries with size bounded by $e^{3^{nm}}$, where the probability that a matrix is surjective goes to zero (In fact, the entries will be bounded by $e^{n^{2}\log(n)m2^{nm}}$). Note that this depends on $m$, so taking $m$ to be a large function of $n$ cannot resolve this need for a bound on the size of the entries.

Let $P$ be the set of the first $2^{nm}n$ primes. For each $i,j$ we choose a subset $P'_{i,j}\subseteq P$ independently at random by taking $p\in P'_{i,j}$ independently with probability $\frac{1}{2}$ for every $p\in P$. We let $A_{ij}=\prod_{p\in P'_{i,j}}p$.

$A$ is $\epsilon$-balanced for $\epsilon=\frac{1}{2}$: At a prime $p\in P$, this is obvious, since $\P(A_{ij}\equiv 0\pmod{p})=\frac{1}{2}$. for $p\notin P$, this follows by noting that when we choose whether to put the last prime of $P$ in $P'$, we choose whether or not to change $A_{ij}\pmod{p}$ with probability  $\frac{1}{2}$.

To see the bound on the size of $A_{ij}$, note that $A_{ij}$ is bounded by the product of the first $2^{nm}n$ primes. In general, the product of the first $k$ primes is bounded by $e^{2k\log(k)}$ (see for example \cite{ba}). Taking $k=2^{nm}n$, we see that 
\[|A_{ij}|\leq e^{2^{nm+1}n\log(2^{nm}n)}\leq e^{n^{2}\log(n)m2^{nm+1}}\leq e^{3^{nm}}\]
for all sufficiently large $n$.

Finally, for every $p\in P$, If all the entries of $A$ are zero mod $p$, then $A$ is not surjective. For each $p\in P$, this occurs independently with probability $2^{-nm}$. As there are $2^{nm}n$ primes in $P$, the probability of being surjective is at most $(1-2^{-nm})^{2^{nm}n}=((1-2^{-nm})^{2^{nm}})^{n}\leq e^{-n}\rightarrow 0$. This shows that the conclusion of Theorem~\ref{thm:main} does not hold in this case.

\section{Random matrices over $\Zh$}
\label{sec:zhat}

In this section, we prove Theorem~\ref{thm:compsurj}.

First, recall that $\Zh=\prod_{p}\Z_{p}$, and that the Haar measure on $\Zh$ is the product of the Haar measures on $\Z_{p}$. Furthermore, the matrix $A:\Zh^{m}\rightarrow\Zh^{n}$ is the direct product of the matrices $A_{p}:\Z_{p}^{m}\rightarrow\Z_{p}^{n}$, Where $A_{p}$ is the $n\times m$ matrix given by taking the $\Z_{p}$-part of the coefficients of $A$. In particular, $A$ is surjective only if $A_{p}$ is surjective for every $p$. 

When the entries of $A$ are independent random variables in $\Zh$, the $A_{p}$ are all independent random matrices whose values are independent uniformly distributed random variables in $\Z_{p}$. Hence 
\[\P(A\text{ is surjective})=\prod_{p}\P(A_{p}\text{ is surjective}).\]
The main part of the proof of Theorem~\ref{thm:compsurj} is the following lemma:

\begin{lemma}
\label{lemma:padicpart}
Let $A_{p}$ be a random $n\times m$ matrix, with $m\geq n$, whose entries are independent and uniformly distributed in $\Z_{p}$. Then
\[\P(A_{p}\text{ is surjective})=\prod_{k=m+1-n}^{m}(1-p^{-k}).\]
\end{lemma}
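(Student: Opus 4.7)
The plan is to reduce the $\Z_p$-question to a linear algebra question over $\F_p$, and then compute the $\F_p$-probability by a row-by-row independence argument.

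First I would use Nakayama's lemma to convert surjectivity over $\Z_p$ into surjectivity of the reduction mod $p$. Concretely, $\coker(A_p)$ is a finitely generated $\Z_p$-module, so $\coker(A_p) = 0$ iff $\coker(A_p)/p\coker(A_p) = 0$. Since reduction mod $p$ commutes with cokernel, this last module equals $\coker(A_p \bmod p)$, and so $A_p$ is surjective onto $\Z_p^n$ iff the reduced matrix $\bar{A}_p$ is surjective onto $\F_p^n$, i.e.\ has rank $n$. Because the entries of $A_p$ are Haar-distributed on $\Z_p$, the entries of $\bar{A}_p$ are independent and uniformly distributed on $\F_p$.

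Next I would compute $\P(\bar{A}_p \text{ has rank } n)$ by revealing the rows of $\bar{A}_p$ one at a time. Conditional on the first $i$ rows being linearly independent in $\F_p^m$, they span a subspace containing $p^i$ vectors. The $(i+1)$-th row is a uniform vector in $\F_p^m$ independent of the previous ones, so it lies outside their span with probability $1 - p^{i-m}$. Multiplying over $i = 0, 1, \dots, n-1$ gives
\[
\P(\bar{A}_p \text{ has rank } n) = \prod_{i=0}^{n-1}\left(1 - p^{i-m}\right) = \prod_{k=m-n+1}^{m}\left(1 - p^{-k}\right),
\]
which is the desired formula after the change of index $k = m - i$.

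There is no real obstacle here: the only subtlety is justifying the passage from $\Z_p$ to $\F_p$, which is standard Nakayama combined with the fact that the Haar measure on $\Z_p$ pushes forward to the uniform measure on $\F_p$. The row-by-row counting is the same calculation that appears in the proof of Theorem~\ref{thm:rectrank}, specialized to a single prime and to exact uniformity, where one gets equality rather than just an exponential bound.
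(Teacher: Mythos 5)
Your proof is correct and follows essentially the same route as the paper: reduce surjectivity over $\Z_p$ to full rank of the mod-$p$ reduction over $\F_p$, then compute the full-rank probability by revealing rows one at a time and multiplying the conditional probabilities $1-p^{i-m}$. One small improvement: you justify the reduction to $\F_p$ cleanly via Nakayama's lemma, whereas the paper's proof only states the easy ``only if'' direction of the equivalence and leaves the (needed) converse implicit.
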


\begin{proof}
First, recall that $A_{p}$ is surjective on $\Z_{p}^{n}$ only if $A_{p}/p$ is surjective on $\Z_{p}/p\Z_{p}^{n}=\Z/p\Z^{n}$. Hence we can consider $A_{p}/p$. Note that its entries are independent and uniformly distributed in $\Z/p\Z$. 

As a matrix over a field, $A_{p}/p$ is surjective only if it has rank $n$, which happens only if its $n$ rows are independent. We prove by induction that the probability of the first $r$ rows being independent is $\prod_{k=m+1-r}^{m}(1-p^{-k})$.

Let $u_{1},\dots,u_{n}$ be the rows of $A_{p}/p$. For $r=1$, $u_{1}$ is independent only if it is nonzero. As it has $m$ independent entries, this happens with probability $1-p^{-m}$.
Now assume the claim for $r$. The first $r+1$ rows, $u_{1},\dots,u_{r+1}$ are independent only if $u_{1},\dots,u_{r}$ are independent and $u_{r+1}$ is independent of them. By the assumption, the probability that the first $u_{1},\dots,u_{r}$ rows are independent is $\prod_{k=m+1-r}^{m}(1-p^{-k})$. 
If the first $u_{1},\dots,u_{r}$ rows are independent, there exists some set $I$ of $r$  columns such that $u_{1}|_{I},\dots,u_{r}|_{I}$ are independent. Then there are unique coefficients $a_{1},\dots,a_{r}$ such that $u_{r+1}|_{I}=\sum a_{i}u_{i}|_{I}$, and $u_{r+1}$ is dependent on $u_{1},\dots,u_{r}$ only if $(u_{r+1})_{j}=\sum a_{i}(u_{i})_{j}$ for every $j\notin I$. Since $(u_{r+1})_{j}$ is independent of the rest of the matrix, this happens with probability $\frac{1}{p}$. As there are $m-r$ values for $j\notin I$, this implies that the probability that $u_{r+1}$ is dependent on $u_{1},\dots,u_{r}$ is $p^{-(m-r)}$. Hence overall, the probability that $u_{1},\dots,u_{r+1}$ are independent is 
\[(1-p^{-(m-r)})\prod_{k=m+1-r}^{m}(1-p^{-k})=\prod_{k=m+1-(r+1)}^{m}(1-p^{-k}),\]

\noindent which completes the proof.
\end{proof}

We now prove Theorem~\ref{thm:compsurj}.
\begin{proof}[Proof of Theorem~\ref{thm:compsurj}]
Let $A$ be a random $n\times(n+u)$ matrix over $\Zh$. As we saw, 
\begin{align*}
\P(A\text{ is surjective})&=\prod_{p}\P(A_{p}\text{ is surjective}) \\
&=\prod_{p}\prod_{k=u+1}^{n+u}(1-p^{-k}) \\
&=\prod_{k=u+1}^{n+u}\prod_{p}(1-p^{-k}) \\
&=\prod_{k=u+1}^{n+u}\zeta(k)^{-1}\rightarrow\prod_{k=u+1}^{\infty}\zeta(k)^{-1}. 
\end{align*}
The last two lines hold when $u>0$. When $u=0$, $\prod_{p}(1-p^{-1})=0$, so the product converges to zero.
\end{proof}

\section{Further results}
\label{sec:further}

Since the original writing of this paper, a number of the results conjectured here have been proven. In \cite{nw}, Nguyen and Wood prove Conjecture~\ref{conj:surj}: 

\begin{thm}[Theorem 1.4 of\cite{nw}]
\label{thm:nw_surj}
For integers $n,u\geq 0$, let $A_{n\times(n+u)} $ be an integral $n\times(n+u)$ matrix with entries i.i.d copies
of an $\alpha_{n}$-balanced random integer $\xi_{n}$, with $\alpha_{n}\geq n^{-1+\epsilon}$ and $|\xi_{n}| \leq n^{T}$ for any fixed parameters $0<\epsilon<1$ and $T>0$ not depending on $n$. Then 
\[\lim_{\min{n,u}\rightarrow\infty}\P(A_{n\times(n+u)}\text{ is surjective}) = 1.\]
\end{thm}

In fact, they prove stronger results. Their main theorem is the following:
\begin{thm}[Theorem 1.1 of\cite{nw}]
\label{thm:nw_main}
Let $n,u,A_{n\times(n+u)}$ be as above, and let $B$ be a fixed finite abelian group. The for any fixed $u$,
\[\lim_{n\rightarrow\infty}\P\left (\coker{A_{n\times(n+u)}}\simeq B\right ) = \frac{1}{\left | B \right | \left | \Aut{(B)}\right |}\prod_{k=u+1}^{\infty} \zeta(k)^{-1}.\]
\end{thm}

And these two imply the following corollary:
\begin{cor}[Theorem 1.5 of \cite{nw}]
\label{cor:nw_surj}
Let $n,u,A_{n\times(n+u)}$ be as above. For any fixed $u \geq 0$:
\[\lim_{n\rightarrow\infty}\P(A_{n\times(n+u)}\text{ is surjective}) = \prod_{k=u+1}^{\infty} \zeta(k)^{-1}.\]
\end{cor}

Finally, in \cite{np}, Nguyen and Paquette give some bounds on the speed of convergence for the probability of the matrix being surjective. In particular, they prove:
\begin{thm}[Theorem 1.4 (5) of\cite{np}]
\label{thm:np_main}
Let $n,u,A_{n\times(n+u)}$ be as above, and assume that $\alpha_{n}\geq\frac{\log^{O(1)}(n)}{n}$. Then
\[ \P\left(A_{n\times(\left \lfloor (1+o(1))n \right \rfloor)}\text{ is surjective}\right) = 1 - O\left ( n^{-\omega (1)}\right ).\]
\end{thm}

\bibliography{inMatrix}
\bibliographystyle{plain}

\end{document}